\newtheorem{theorem}{Theorem}[section]
\newtheorem{lemma}[theorem]{Lemma}
\newtheorem{maintheorem}[theorem]{Main Theorem}
\theoremstyle{remark}
\theoremstyle{corollary}
\newtheorem{corollary}[theorem]{Corollary}
\numberwithin{equation}{section}
\DeclareMathOperator{\Ext}{Ext}
\DeclareMathOperator{\Hom}{Hom}
\DeclareMathOperator{\Proj}{Proj}
\DeclareMathOperator{\Spec}{Spec}
\DeclareMathOperator{\codim}{codim}
\DeclareMathOperator{\Supp}{Supp}
\DeclareMathOperator{\injdim}{injdim}
\DeclareMathOperator{\hgt}{ht}
\DeclareMathOperator{\ch}{char}
\begin{document}

\title{Lyubeznik numbers for nonsingular projective varieties}
\author{Nicholas Switala}
\address{School of Mathematics\\ University of Minnesota\\ 127 Vincent Hall\\ 206 Church St. SE\\
Minneapolis, MN 55455}
\email{swit0020@math.umn.edu}
\thanks{NSF support through grant DMS-0701127 is gratefully acknowledged.}
\subjclass[2010]{Primary 13D45, 14F40}
\keywords{Local cohomology, algebraic de Rham cohomology, Lyubeznik numbers}

\begin{abstract}
In this paper, we determine completely the Lyubeznik numbers $\lambda_{i,j}(A)$ of the local ring $A$ at the vertex of the affine cone over a nonsingular projective variety $V$, where $V$ is defined over a field of characteristic zero, in terms of the dimensions of the algebraic de Rham cohomology spaces of $V$.  In particular, we prove that these numbers are intrinsic numerical invariants of $V$, even though \textit{a priori} their definition depends on an embedding into projective space.  This provides supporting evidence for a positive answer to the question of embedding-independence for arbitrary varieties in characteristic zero, which is still open.
\end{abstract}

\maketitle

\section{Introduction}

Let $A$ be a local ring that admits a surjection from an $m$-dimensional local ring $(R, \mathfrak{m})$ containing its residue field $k$, and let $I \subset R$ be the kernel of the surjection.  (All rings in this paper are commutative and have an identity.) For non-negative integers $i$ and $j$, the local cohomology multiplicities, or Lyubeznik numbers, of $A$ are defined to be $\lambda_{i,j}(A) = \dim_k(\Ext^i_R(k, H^{m-j}_I(R)))$, the $i^{th}$ Bass number of $H^{m-j}_I(R)$ with respect to $\mathfrak{m}$.  These numbers were defined in \cite{dmodules}.  It is proven in \cite[Theorem 3.4(d)]{dmodules} (resp. \cite[Theorem 2.1]{huneke}) that these Bass numbers are all finite when $\ch(k) = 0$ (resp. $\ch(k) = p > 0$), and further, they depend only on $A$, $i$, and $j$, not on the choice of $R$ nor on the choice of surjection $R \rightarrow A$ \cite[Theorem-Definition 4.1]{dmodules}.  Some properties of these numbers were subsequently worked out in \cite{walther} and \cite{kawasaki}, and a topological interpretation (in the case $k = \mathbb{C}$) was given in \cite{garcia}.\\

In this paper, we are concerned with the case in which $A$ is the local ring at the vertex of the affine cone over a projective variety.  To be precise, let $V$ be a projective variety of dimension $r$ over a field $k$ of characteristic zero.  Under an embedding $V \hookrightarrow \mathbb{P}^n_k$, we can write $V = \Proj(k[x_0, \ldots, x_n]/I)$ where $I$ is a homogeneous defining ideal for $V$.  Let $\mathfrak{m} = (x_0, \ldots, x_n)$ be the homogeneous maximal ideal of $k[x_0, \ldots, x_n]$, so that $I \subset \mathfrak{m}$.  Then $A = (k[x_0, \ldots, x_n]/I)_{\mathfrak{m}}$ is the local ring at the vertex of the affine cone over $V$, and we can define the multiplicities $\lambda_{i,j}(A)$ as in the preceding paragraph.  In \cite[p. 133]{survey}, Lyubeznik asked whether $\lambda_{i,j}(A)$ depend only on $V$, $i$, and $j$, and not on the embedding $V \hookrightarrow \mathbb{P}^n_k$ (or, for that matter, on $n$).  Zhang settled this question in the affirmative in the case of the ``top'' Lyubeznik number $\lambda_{r+1, r+1}(A)$, in any characteristic, in \cite{zhang}; he went on to give an affirmative answer for all $\lambda_{i,j}(A)$ in the $\ch(k) = p > 0$ case in \cite{zhang2}.  In \cite{zhang2}, several preliminary results are established in a characteristic-free setting, but the main line of argument makes crucial use of the Frobenius morphism.  This left the characteristic-zero case open for all but the ``top'' Lyubeznik number.\\

In this paper, we determine completely the Lyubeznik numbers $\lambda_{i,j}(A)$, in the case in which $V$ is a \textit{nonsingular} variety, in terms of quantities known already to be embedding-independent.  Our proof uses results of Hartshorne and Ogus on algebraic de Rham cohomology, and develops the ideas of \cite[p. 54]{dmodules} and \cite[Remark 2]{garcia}.  The characteristic-zero analogue of the result of \cite{zhang2} remains open for singular varieties.  Our result on the embedding-independence of the integers $\lambda_{i,j}(A)$ for nonsingular varieties provides further supporting evidence for a positive answer to the question of their embedding-independence for arbitrary varieties.\\

We fix some further notation.  $V$, $I$, and $A$ remain as above, except that now and for the remainder of the paper we will assume $V$ is nonsingular.  Write $R = (k[x_0, \ldots, x_n])_{\mathfrak{m}}$, a regular local ring, so that $A = R/I$; it is this $R$ which will intervene in the definition of $\lambda_{i,j}(A)$ in terms of Bass numbers.  The following quantities will appear repeatedly: $\dim(R) = \hgt(\mathfrak{m}) = n+1$, $\hgt(I) = \codim(V, \mathbb{P}^n_k) = n-r$, and $\dim(A) = r+1$. In particular, since $\dim(R) = n+1$, our definition of the Lyubeznik numbers of $A$ reads $\lambda_{i,j}(A) = \dim_k(\Ext^i_R(k, H^{n+1-j}_I(R)))$.\\

Finally, we remark that, as explained in \cite[$\S 8$]{zhang2}, it is harmless to assume that $k$ is algebraically closed, since Lyubeznik numbers are unaltered under extension of the base field. Under this assumption, we have the following embedding-independent description of $\lambda_{r+1, r+1}$:

\begin{theorem}\label{thm:1.1}
 \cite[Theorem 2.7]{zhang} Let $V_1, \ldots, V_s$ be the $r$-dimensional irreducible components of $V$, and let $\Gamma_V$ be the graph on the vertices $V_1, \ldots, V_s$ in which $V_i$ and $V_j$ are joined by an edge if and only if $\dim(V_i \cap V_j) = r-1$.  Then $\lambda_{r+1, r+1}(A)$ equals the number of connected components of $\Gamma_V$.
 \end{theorem}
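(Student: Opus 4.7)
The plan is to go through the classical connectedness description of the top Lyubeznik number. The key input is Lyubeznik's theorem identifying $\lambda_{d,d}(B)$, for any complete local ring $B$ of dimension $d$ containing a field, with the number of connected components of the Hochster--Huneke graph $\Gamma(B)$: the vertices are those minimal primes $\mathfrak{p}$ of $B$ with $\dim(B/\mathfrak{p}) = d$, and two such primes $\mathfrak{p}, \mathfrak{q}$ are joined by an edge precisely when $\dim(B/(\mathfrak{p} + \mathfrak{q})) = d-1$.

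First I would reduce to the complete case: Lyubeznik numbers are invariant under the faithfully flat extension $R \to \hat R$ and the corresponding passage $A \to \hat A$, since local cohomology commutes with the flat base change $R \to \hat R$ and the relevant $\Ext$-dimensions are preserved. Thus $\lambda_{r+1, r+1}(A) = \lambda_{r+1, r+1}(\hat A)$, and it suffices to count the connected components of $\Gamma(\hat A)$. Next I would set up the comparison with $\Gamma_V$. The minimal primes of $A$ of maximal height correspond bijectively to the $r$-dimensional components $V_1, \ldots, V_s$ of $V$ (the cone over an $r$-dimensional variety has dimension $r+1 = \dim A$), and the intersection condition $\dim(V_i \cap V_j) = r-1$ translates directly to the Hochster--Huneke edge condition $\dim(A/(\mathfrak{p}_i + \mathfrak{p}_j)) = r = \dim(A) - 1$.

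The main obstacle is that the passage $A \to \hat A$ can split minimal primes through analytic branching, so $\Gamma(\hat A)$ may carry strictly more vertices, and more edges, than $\Gamma_V$. To conclude that the component counts nonetheless agree, I would show that for each irreducible $V_i$ over the algebraically closed field $k$, the analytic branches of the cone over $V_i$ at the vertex all lie in a single connected component of $\Gamma(\hat A)$, and that edges between branches arising from different $V_i, V_j$ appear in $\Gamma(\hat A)$ precisely when $\dim(V_i \cap V_j) = r-1$. The first claim would follow from a Grothendieck-style connectedness theorem applied to the punctured spectrum of the completion of the local ring of the cone over $V_i$, using that $V_i$ is geometrically connected and projective. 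Once this is established, collapsing each cluster of branches coming from a single $V_i$ to a vertex of $\Gamma_V$ defines a graph morphism inducing a bijection on connected components, yielding the claimed equality.
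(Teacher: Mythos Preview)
The paper does not give its own proof of Theorem~\ref{thm:1.1}: it is simply quoted from \cite[Theorem~2.7]{zhang} and used as input for the remainder of the argument. So there is no proof in the paper to compare your proposal against.

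That said, your sketch is essentially a reconstruction of Zhang's original argument, which in turn rests on Lyubeznik's result \cite{gennady} identifying $\lambda_{d,d}$ of a complete equidimensional local ring containing a separably closed field with the number of connected components of its Hochster--Huneke graph. Your reduction to the complete case and the translation between $\Gamma_V$ and the Hochster--Huneke graph of $A$ are correct. The one place where your outline is genuinely thin is the analytic branching step: you assert that the branches of the cone over a single $V_i$ remain in one connected component of $\Gamma(\hat A)$, appealing to a ``Grothendieck-style connectedness theorem'' for the punctured spectrum. This is the right idea, but it is exactly the nontrivial content of Zhang's proof, and it does not follow from connectedness of $V_i$ alone without further work (one needs, for instance, that the punctured spectrum of the completed cone over a geometrically integral projective variety is connected, which Zhang establishes via a careful argument). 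If you intend this as a proof rather than a sketch, that step needs to be filled in or cited precisely.
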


Remarks in \cite[$\S 4$]{dmodules} establish that $\lambda_{i,j}(A) = 0$ if either $i$ or $j$ is greater than $r+1$, Theorem \ref{thm:1.1} deals with the case $i=j=r+1$, and our Main Theorem \ref{thm:1.2} below deals with the remaining cases.  This furnishes a complete description of all $\lambda_{i,j}(A)$.

\begin{maintheorem}\label{thm:1.2}

Write $\beta_j = \dim_k(H^j_{dR}(V))$, where $H_{dR}$ denotes algebraic de Rham cohomology in the sense of \cite{hartshorne}.   \cite[Theorem II.6.2]{hartshorne} implies that all $\beta_j$ are finite.  With the above notation and the hypothesis that $V$ is nonsingular, the following hold:

\begin{enumerate}
\item \label{claim:1} $\lambda_{i,j} = 0$ if $i>0$ and $j<r+1$;
\item \label{claim:2} $\lambda_{0,0} = 0, \lambda_{0,1} = \beta_0 - 1$;
\item \label{claim:3} $\lambda_{0,2} = \beta_1, \lambda_{0,j} = \beta_{j-1} - \beta_{j-3}$ for $j = 3, \ldots, r$;
\item \label{claim:4} $\lambda_{0, r+1} = \lambda_{1, r+1} = 0$ (cf. \cite[Theorem \textit{(c)}]{garcia});
\item \label{claim:5} $\lambda_{\ell, r+1} = \lambda_{0, r+2-\ell}$ (determined above) for $\ell = 2, \ldots, r$ (cf. \cite[Remark 1]{garcia}).
\end{enumerate}

\end{maintheorem}

Since algebraic de Rham cohomology is intrinsic to $V$ \cite[Theorem II.1.4]{hartshorne}, the above list, together with the result of \cite{zhang}, immediately implies the following:

\begin{corollary}\label{cor:1.3}
If $V$ is a nonsingular projective variety over a field $k$ of characteristic zero and $\lambda_{i,j}(A)$ is calculated as above, then for all $i$ and $j$, $\lambda_{i,j}(A)$ depends only on $V$, $i$, and $j$, not on $n$ nor on the embedding $V \hookrightarrow \mathbb{P}^n_k$.
\end{corollary}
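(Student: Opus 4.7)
\emph{Proof plan.} The approach combines three ingredients: a structural result on the $D_R$-modules $H^j_I(R)$ exploiting the smoothness of $V$, the algebraic de Rham cohomology of Hartshorne and Ogus, and the Grothendieck spectral sequence relating iterated local cohomology to the local cohomology of $R$.

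\textbf{Step 1 (Kashiwara-type reduction).} First I would show that $H^j_I(R)$ is supported only at $\mathfrak{m}$ for $j > n-r$. Because $V$ is nonsingular, the affine cone $V(I) \subset \Spec R$ is smooth away from the vertex $\mathfrak{m}$, so on the open set $\Spec R \setminus \{\mathfrak{m}\}$ it is a smooth closed subscheme of codimension $n-r$; the local cohomology sheaves $\mathcal{H}^j_I(\mathcal{O})$ therefore vanish there for $j > n-r$. Since each $H^j_I(R)$ is a holonomic $D_R$-module in characteristic zero (\cite[Theorem 3.4]{dmodules}), and any such module supported at the closed point is a direct sum of copies of the unique simple one $E(k) = H^{n+1}_\mathfrak{m}(R)$ (the characteristic-zero Kashiwara equivalence), we obtain $H^j_I(R) \cong E(k)^{a_j}$ for some $a_j \geq 0$, with $a_{n+1} = 0$ since $H^{n+1}_I(R) = 0$ (as $\dim R/I = r+1 > 0$). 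Immediately $\Ext^i_R(k, E(k)^{a_j}) = 0$ for $i > 0$, proving claim (1); and $\lambda_{0, n+1-j}(A) = a_j$ for $n-r < j \leq n+1$.

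\textbf{Step 2 (Computing $a_j$ via de Rham cohomology).} I would identify $a_j$ by a two-track computation of the algebraic de Rham cohomology of the punctured affine cone $C^\circ$ over $V$. On the one hand, $C^\circ \to V$ is a principal $\mathbb{G}_m$-bundle (the total space of $\mathcal{O}_V(1) \setminus \{0\}$), whose Gysin sequence
\[
 \cdots \to H^{i-2}_{dR}(V) \xrightarrow{\cup \eta} H^i_{dR}(V) \to H^i_{dR}(C^\circ) \to H^{i-1}_{dR}(V) \xrightarrow{\cup \eta} H^{i+1}_{dR}(V) \to \cdots
\]
(with $\eta$ the restriction of the hyperplane class) expresses $H^*_{dR}(C^\circ)$ in terms of the $\beta_j$ and the Lefschetz cup product. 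On the other hand, the Hartshorne-type spectral sequence $E_1^{p,q} = H^q_I(\Omega^p_R) \Rightarrow H^{p+q}_{V(I), dR}(\Spec R)$, combined with excision, expresses the same cohomology in terms of the $a_j$, using that $\Omega^p_R$ is free and that for $M = E(k)^a$ the complex $\Omega^\bullet_R \otimes M$ has total cohomology $k^a$ concentrated in one degree. Matching the two computations, together with the known $H^*_{dR}(\mathbb{P}^n) \cong k[\eta]/(\eta^{n+1})$ and the Gysin sequence of $V \hookrightarrow \mathbb{P}^n$, produces the formulas in claims (2) and (3). The ``$-1$'' of $\lambda_{0,1} = \beta_0 - 1$ traces to the class of the constant function $1$ in the excision sequence for $V(I) \subset \Spec R$; the pattern $\beta_{j-1} - \beta_{j-3}$ in claim (3) arises from the cup-product-with-$\eta$ maps in the $\mathbb{G}_m$-Gysin sequence.

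\textbf{Step 3 (Grothendieck spectral sequence for $H^{n-r}_I(R)$).} For the basic module $H^{n-r}_I(R)$, which is not supported only at $\mathfrak{m}$, I would apply
\[
 E_2^{p,q} = H^p_\mathfrak{m}(H^q_I(R)) \Rightarrow H^{p+q}_\mathfrak{m}(R),
\]
whose abutment is $E(k)$ in total degree $n+1$ and zero elsewhere. By Step 1, the only nonzero $E_2$-terms with $q > n-r$ lie on the row $p = 0$ and equal $E(k)^{a_q}$; the remaining nonzero terms $H^p_\mathfrak{m}(H^{n-r}_I(R))$ lie on the row $q = n-r$. Support constraints show that the only possibly-nontrivial differentials run from the column $p=0$ to the row $q=n-r$, and convergence off the anti-diagonal $p+q = n+1$ forces, for $1 \leq s \leq r-1$, the page-$(s+1)$ differential $E(k)^{a_{n-r+s}} \to H^{s+1}_\mathfrak{m}(H^{n-r}_I(R))$ to be an isomorphism. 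Taking $\Hom_R(k, -)$ (so that a direct sum of $a$ copies of $E(k)$ contributes $a$ to the socle dimension) yields $\lambda_{\ell, r+1}(A) = a_{n-r+\ell-1} = \lambda_{0, r+2-\ell}(A)$ for $2 \leq \ell \leq r$, which is claim (5); the same analysis shows $H^0_\mathfrak{m}(H^{n-r}_I(R)) = H^1_\mathfrak{m}(H^{n-r}_I(R)) = 0$, yielding the remaining vanishings $\lambda_{0,r+1} = \lambda_{1,r+1} = 0$ of claim (4).

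\textbf{Main obstacle.} The subtlest work is in Step 2: matching the $D$-module-theoretic expression for $H^*_{dR}(C^\circ)$ with the geometric expression requires careful bookkeeping through the Hartshorne spectral sequence, the two Gysin sequences, and the $\mathbb{G}_m$-weight decomposition of the local cohomology. It is here that the results of Hartshorne and Ogus are used most crucially, and where the characteristic-zero hypothesis (for Hodge-de Rham degeneration on smooth projective $V$) is essential.
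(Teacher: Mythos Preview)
Your overall architecture matches the paper's: Corollary~\ref{cor:1.3} follows immediately once all $\lambda_{i,j}(A)$ are expressed in terms of the de Rham Betti numbers $\beta_j$ of $V$ (together with Zhang's result for $\lambda_{r+1,r+1}$), so the real content is establishing claims (1)--(5) of the Main Theorem. Your Steps~1 and~3 are essentially correct. Step~1 is the paper's argument (Lemma~\ref{lem:2.1} and its corollary). Step~3, via the Grothendieck composite-functor spectral sequence, is exactly the alternative proof of claims~(4) and~(5) that the paper cites from \cite{blickle}; your convergence analysis goes through once you use that each $H^p_\mathfrak{m}(H^{n-r}_I(R))$ is itself injective (another consequence of \cite{dmodules}), so that its socle dimension really is the Bass number $\lambda_{p,r+1}$.

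Step~2, however, has two genuine gaps. First, your ``second computation'' is underspecified. In the spectral sequence $E_1^{p,q}=H^q_I(\Omega^p_R)\Rightarrow H^{p+q}_{V(I),dR}(\Spec R)$ the rows $q>n-r$ are controlled by the $a_j$, but the row $q=n-r$ carries the de Rham complex of $H^{n-r}_I(R)$, which is \emph{not} of the form $E(k)^a$ and whose contribution you have no independent handle on; and ``excision'' does not pass from $H^*_{V(I),dR}(\Spec R)$ to $H^*_{dR}(C^\circ)$ in any direct way. The paper sidesteps this entirely: it applies Matlis duality together with Ogus's theorem \cite[Theorem~2.3]{ogus} to obtain $a_{n+1-j}=\dim_k H^j_P(Y)$ \emph{directly}, and then invokes Hartshorne's exact sequence \cite[Proposition~III.3.2]{hartshorne} (which is, in effect, your $\mathbb{G}_m$-bundle Gysin sequence rewritten in terms of local de Rham cohomology at the cone point) to relate $H^*_P(Y)$ to $H^*_{dR}(V)$. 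Second, and more seriously, to extract the formula $\lambda_{0,j}=\beta_{j-1}-\beta_{j-3}$ from that sequence you must know that the cup-product maps $H^{i}_{dR}(V)\to H^{i+2}_{dR}(V)$ are \emph{injective} for $i<r$. This is the hard Lefschetz theorem, not Hodge--de Rham degeneration; the two are distinct consequences of Hodge theory, and degeneration alone does not give injectivity of the Lefschetz operator. The paper invokes hard Lefschetz explicitly (Theorem~\ref{thm:3.3}) to split the long exact sequence into short ones; without it you cannot read off the individual $\lambda_{0,j}$.
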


The rest of the paper consists in a proof of the first three claims of Main Theorem \ref{thm:1.2}.  Claim \eqref{claim:1} will follow from known results on the local cohomology $H_I^{n+1-j}(R)$ and from the definition of $\lambda_{i,j}$.  To establish Claims \eqref{claim:2} and \eqref{claim:3}, we will first relate local cohomology supported at $I$ to local cohomology of the formal spectrum of the $I$-adic completion of $R$, then we will use a result of Ogus \cite{ogus} to relate this formal local cohomology to local de Rham cohomology.  Finally, we will appeal to an exact sequence worked out in \cite{hartshorne} connecting local de Rham cohomology at the vertex of the affine cone over $V$ to the de Rham cohomology of $V$ itself.  Claims \eqref{claim:4} and \eqref{claim:5} appeared first in \cite{garcia} and thus are not proven here; see also \cite{blickle} for an alternative proof using the Grothendieck composite-functor spectral sequence.\\

We would like to thank Professor Gennady Lyubeznik for suggesting the problem and for helpful conversations about the results in this paper.  We also thank Professor William Messing for profitable conversations about algebraic de Rham cohomology and the hard Lefschetz theorem.

\section{Proof of Claim \eqref{claim:1}}

We need the following lemma on the support of the local cohomology modules under consideration:

\begin{lemma}\label{lem:2.1}
With the above notation, since $V$ is nonsingular, $\Supp(H_I^i(R)) \subset \{\mathfrak{m}\}$ whenever $i \neq \hgt(I) = n-r.$
\end{lemma}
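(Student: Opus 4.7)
The plan is to reduce to a local computation at non-maximal primes $\mathfrak{p} \in V(I)$ and exploit the fact that, away from the vertex, the affine cone over $V$ inherits its nonsingularity from $V$.

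First, I would note that $\Supp(H_I^i(R)) \subset V(I)$ in general, so it suffices to show $H_I^i(R)_\mathfrak{p} = 0$ for every prime $\mathfrak{p} \in V(I)$ with $\mathfrak{p} \neq \mathfrak{m}$ and every $i \neq n-r$. By the standard flat base change isomorphism $H_I^i(R)_\mathfrak{p} \cong H^i_{IR_\mathfrak{p}}(R_\mathfrak{p})$, the question becomes one about the structure of $IR_\mathfrak{p}$ inside the regular local ring $R_\mathfrak{p}$. Geometrically, such a prime $\mathfrak{p} \neq \mathfrak{m}$ corresponds to a point of the punctured affine cone $\Spec(R/I) \setminus \{\mathfrak{m}\}$, which is a $\mathbb{G}_m$-torsor over $V$ and hence is nonsingular because $V$ is. Therefore $(R/I)_\mathfrak{p}$ is a regular local ring.

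I would then invoke the standard characterization that a regular quotient of a regular local ring arises from the quotient by a regular sequence that is part of a regular system of parameters. Thus $IR_\mathfrak{p}$ is generated by a regular sequence $f_1, \ldots, f_c$ in $R_\mathfrak{p}$, with $c = \hgt(IR_\mathfrak{p})$; the pure-dimensionality of $V$ makes $c = n-r$, since $I$ then has pure height $n-r$ and any minimal prime of $I$ contained in $\mathfrak{p}$ keeps its height after localization. Finally, for a complete intersection ideal of height $c$ in a Cohen-Macaulay local ring, the \v{C}ech complex on a regular sequence of generators computes local cohomology and has its only nonzero cohomology group in degree $c$. Hence $H^i_{IR_\mathfrak{p}}(R_\mathfrak{p}) = 0$ for $i \neq n-r$, completing the argument.

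The main conceptual point — rather than a technical obstacle — is the translation of the geometric nonsingularity of $V$ into the algebraic statement that $I$ is locally a complete intersection off the vertex; each of the intermediate ingredients (flat base change, the regular-quotient criterion, and complete-intersection vanishing of local cohomology) is standard.
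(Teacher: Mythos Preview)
Your proposal is correct and follows essentially the same route as the paper's own proof: localize at a non-maximal prime $\mathfrak{p}\supset I$, use flat base change for local cohomology, deduce regularity of $(R/I)_{\mathfrak{p}}$ from the nonsingularity of the punctured cone, conclude that $IR_{\mathfrak{p}}$ is generated by a regular sequence of length $n-r$, and invoke the complete-intersection vanishing of local cohomology. The only cosmetic differences are that you phrase the regularity via the $\mathbb{G}_m$-torsor structure and make the pure-dimensionality step explicit, whereas the paper simply says the cone has an isolated singularity at its vertex and asserts $\hgt(I)=\hgt(I_{\mathfrak{p}})$ directly.
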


\begin{proof}
Since $V$ is nonsingular, the affine cone over $V$ has an isolated singularity at its vertex, and so all its other local rings are regular local rings.  That is, for any prime ideal $\mathfrak{p} \neq \mathfrak{m}$ of the coordinate ring $k[x_0, \ldots, x_n]/I$, the localization $(k[x_0, \ldots, x_n]/I)_{\mathfrak{p}} = (k[x_0, \ldots, x_n])_{\mathfrak{p}}/(I \cdot (k[x_0, \ldots, x_n])_{\mathfrak{p}}) \simeq R_{\mathfrak{p}}/I_{\mathfrak{p}} \simeq A_{\mathfrak{p}}$ is a regular local ring, where we write $I_{\mathfrak{p}} = I R_{\mathfrak{p}}$. We will show that for any prime ideal $\mathfrak{p} \neq \mathfrak{m}$ of $R$ (that is, any non-maximal homogeneous prime ideal of $k[x_0, \ldots, x_n]$), $\mathfrak{p}$ does not belong to the support of $H_I^i(R)$ for any $i \neq \hgt(I)$, \textit{i.e.} the localization $(H^i_I(R))_{\mathfrak{p}}$ is zero.  This is clear if $I \not \subset \mathfrak{p}$, so we assume that $I \subset \mathfrak{p}$, in which case $\hgt(I) = \hgt(I_{\mathfrak{p}})$. By the flat base change principle for local cohomology \cite[Theorem 4.3.2]{brodmann}, we have $(H^i_I(R))_{\mathfrak{p}} \simeq H^i_{I_{\mathfrak{p}}}(R_{\mathfrak{p}})$.  Since both $R_{\mathfrak{p}}$ and its quotient $R_{\mathfrak{p}}/I_{\mathfrak{p}} \simeq A_{\mathfrak{p}}$ are regular local rings, we conclude \cite[Proposition 2.2.4]{bruns} that $I_{\mathfrak{p}}$ is generated by part of a regular system of parameters of $R_{\mathfrak{p}}$, which must have $\hgt(I_{\mathfrak{p}}) = \hgt(I) = n-r$ elements.  But if an ideal is generated by a regular sequence of length $n-r$, local cohomology supported at this ideal cannot be nonzero in any degree other than $n-r$.
\end{proof}

\begin{corollary}\label{cor:2.2}
If $i > \hgt(I)$, $H_I^i(R)$ is an injective $R$-module.
\end{corollary}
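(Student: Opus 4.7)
The key input is Lemma 2.1, which already tells us that for $i > \hgt(I)$, the support of $H_I^i(R)$ is contained in $\{\mathfrak{m}\}$; in particular, $\dim \Supp(H_I^i(R)) \leq 0$. So the question reduces to whether a local cohomology module with (at most) zero-dimensional support is injective, and all the geometric content has already been done.

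The plan is to combine this with Lyubeznik's bound on the injective dimension of local cohomology modules over regular rings of characteristic zero. Specifically, the result of \cite[Corollary 3.6(c)]{dmodules} asserts that $\injdim_R(H_I^i(R)) \leq \dim \Supp(H_I^i(R))$. Plugging in the bound from Lemma 2.1 gives $\injdim_R(H_I^i(R)) \leq 0$, i.e., $H_I^i(R)$ is an injective $R$-module (if it is nonzero; the zero module is also trivially injective).

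There is no real obstacle here, since both ingredients are in hand. If one wanted a more hands-on proof avoiding the explicit injective-dimension bound, one could instead use that $H_I^i(R)$ is $\mathfrak{m}$-torsion by Lemma 2.1 and has all Bass numbers finite by \cite[Theorem 3.4(d)]{dmodules}. In particular $\mu^0 = \dim_k \Hom_R(k, H_I^i(R)) < \infty$, so one could try to exhibit $H_I^i(R)$ as a finite direct sum of copies of the injective hull $E_R(k) = H_{\mathfrak{m}}^{n+1}(R)$ and deduce injectivity that way. The shortest and cleanest path, however, is simply to quote the Lyubeznik bound and finish in one line.
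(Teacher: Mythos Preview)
Your proof is correct and matches the paper's own argument essentially line for line: Lemma~2.1 gives $\dim\Supp(H_I^i(R))\leq 0$, and Lyubeznik's inequality $\injdim_R(H_I^i(R))\leq \dim\Supp(H_I^i(R))$ finishes it. The only cosmetic difference is that the paper cites this inequality as \cite[Theorem~3.4(b)]{dmodules} rather than Corollary~3.6(c).
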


\begin{proof}
By \cite[Theorem 3.4(b)]{dmodules}, $\injdim(H_I^i(R)) \leq \dim \Supp(H_I^i(R))$, and the right-hand side is zero if $\Supp(H_I^i(R)) \subset \{\mathfrak{m}\}$.
\end{proof}

Now suppose $j < r+1$.  Then $n+1-j > n-r = \hgt(I)$, so by the preceding corollary, $H^{n+1-j}_I(R)$ is an injective $R$-module.  This implies that $\Ext^i_R(k, H^{n+1-j}_I(R))$ vanishes for all $i>0$, so that if $i>0$ and $j < r+1$, the dimension of this Ext module (which, by definition, is $\lambda_{i,j}(A)$) is zero, proving Claim \eqref{claim:1}.\qed 

\section{Proofs of Claims \eqref{claim:2} and \eqref{claim:3}}

We are now concerned with computing $\lambda_{0,j}(A)$ for $0 \leq j \leq r$.  By definition, this is $\dim_k \Ext^0_R(k, H_I^{n+1-j}(R)) = \dim_k \Hom_R(k, H_I^{n+1-j}(R))$, the dimension of the socle of $H^{n+1-j}_I(R)$.  As discussed in the previous section, for $j < r+1$, $H^{n+1-j}_I(R)$ is an injective $R$-module supported only at $\mathfrak{m}$.  By the structure theory for injective modules over Noetherian rings, $H^{n+1-j}_I(R)$ is thus isomorphic to a direct sum of copies of $E = E(R/\mathfrak{m})$, the injective hull (as $R$-module) of the residue field $k = R/\mathfrak{m}$; by \cite[Theorem 3.4(d)]{dmodules}, the number of copies is finite.  Therefore we can write $H^{n+1-j}_I(R) \simeq E^{t_j}$ for some non-negative integer $t_j$.  Since the socle of $E$ is one-dimensional, the socle of $H^{n+1-j}_I(R) \simeq E^{t_j}$ is $t_j$-dimensional, from which it follows that $\lambda_{0,j}(A) = t_j$ (for $0 \leq j \leq r$).  We therefore turn our attention to the determination of $t_j$.\\

We may complete $R$ at the maximal ideal $\mathfrak{m}$ without changing the $\lambda_{i,j}(A)$ \cite[Lemma 4.2]{dmodules}, so we can, and will, assume that $R$ is the complete regular local ring $k[[x_0, \ldots, x_n]]$.  Working over a complete local ring, we have access to the theory of Matlis duality.  Let $D$ denote the Matlis dual functor from the category of $R$-modules to itself, so that $D(M) = \Hom_R(M, E)$ for an $R$-module $M$, where $E = E(R/\mathfrak{m})$ is the injective hull mentioned above.  We will compute $D(H^{n+1-j}_I(R))$ in two different ways and equate the two answers.  On the one hand, $D$ is an exact functor (since $E$ is injective) and $D(E) \simeq R$ \cite[Theorem 10.2.12(i)]{brodmann}, so that 
\[
D(H^{n+1-j}_I(R)) \simeq D(E^{t_j}) \simeq (D(E))^{t_j} \simeq R^{t_j}
\]
for each $j$ with $0 \leq j \leq r$.  On the other hand, by \cite[Proposition 2.2.3]{ogus}, we have isomorphisms $
D(H_I^{n+1-j}(R)) \simeq H_P^{j}(\hat{X}, \mathcal{O}_{\hat{X}})$ for all $j$.  Here $X = \Spec(R)$, $Y \subset X$ is the closed subscheme defined by $I$, $P \in Y$ is the closed point, $\hat{X}$ is the formal completion of $X$ along $Y$ and $\mathcal{O}_{\hat{X}}$ is the structure sheaf of $\hat{X}$.\\

Equating the results of our two calculations of $D(H_I^{n+1-j}(R))$, we see that $H_P^{j}(\hat{X}, \mathcal{O}_{\hat{X}}) \simeq R^{t_j}$.  So we have reduced ourselves to the calculation of the $R$-rank of $H_P^{j}(\hat{X}, \mathcal{O}_{\hat{X}})$, for which we will need algebraic de Rham cohomology.  We briefly recall the definition of algebraic de Rham cohomology, referring the reader to \cite{hartshorne} for details.  If $Y$ is a scheme of finite type over $k$ which admits an embedding as a closed subscheme of a smooth scheme $X$ over $k$, the algebraic de Rham cohomology $H^i_{dR}(Y)$ is by definition the hypercohomology $\mathbb{H}^i(\hat{X}, \hat{\Omega}_{X/k}^{\bullet})$ of the formal completion of the de Rham complex of $X$ along the closed subscheme $Y$. Replacing hypercohomology $\mathbb{H}^i$ with hypercohomology $\mathbb{H}^i_P$ supported in a closed point of $Y$, we get the definition of local algebraic de Rham cohomology $H^i_P(Y)$.  In \cite{hartshorne}, it is proven (Theorem II.1.4) that the definition of $H^i_{dR}(Y)$ is independent of the ambient smooth scheme $X$ and of the choice of embedding.  \cite[Theorem II.6.1]{hartshorne} establishes that the $H^i_{dR}(Y)$ are finite-dimensional $k$-vector spaces.  Analogous embedding-independence and finite-dimensionality statements for $H^i_P(Y)$ are proven in Chapter III of \cite{hartshorne}.\\

We will use the following theorem of Ogus to relate the formal local cohomology obtained above to algebraic de Rham cohomology (we have changed Ogus's notation to match ours):

\begin{theorem}\label{thm:3.1}
\cite[Theorem 2.3]{ogus} Let $k$ be a field of characteristic zero, let $R = k[[x_0, \ldots, x_n]]$, and let $Y$ be a closed subset of $X = \Spec(R)$, defined by an ideal $I$.  Let $\hat{X}$ be the formal completion of $X$ along $Y$, and let $P$ be the closed point.  Assume $s$ is an integer such that $\Supp(H_I^i(R)) \subset \{P\}$ for all $i > n+1-s$.  Then there are natural maps $R \otimes_k H^j_P(Y) \rightarrow H^j_P(\hat{X}, \mathcal{O}_{\hat{X}})$ which are isomorphisms for $j < s$ and injective for $j=s$, where $H^j_P(Y)$ denotes local algebraic de Rham cohomology in the sense defined above.
\end{theorem}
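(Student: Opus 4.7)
The plan is to study the hypercohomology spectral sequence arising from the stupid (Hodge) filtration $F^p = \hat\Omega^{\geq p}_{X/k}$ on the completed de Rham complex. Taking hypercohomology supported at $P$ produces
\[
E_1^{p,q} = H^q_P(\hat{X}, \hat\Omega^p_{X/k}) \Rightarrow H^{p+q}_P(Y),
\]
and the quotient $F^0/F^1 = \mathcal{O}_{\hat{X}}[0]$ furnishes a $k$-linear edge map $H^j_P(Y) \to H^j_P(\hat{X}, \mathcal{O}_{\hat{X}})$; extending $R$-linearly gives the natural map of the theorem.

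To analyze $E_1$, I would use that $\hat\Omega^p_{X/k}$ is locally free of rank $\binom{n+1}{p}$ on the basis of wedges of $dx_i$'s, so $E_1^{p,q} \simeq H^q_P(\hat{X}, \mathcal{O}_{\hat{X}}) \otimes_R \hat\Omega^p_{R/k}$, with $d_1$ the natural extension of the de Rham differential. By the Matlis-dual isomorphism $H^q_P(\hat{X}, \mathcal{O}_{\hat{X}}) \simeq D(H^{n+1-q}_I(R))$ of \cite[Proposition 2.2.3]{ogus}, the support hypothesis together with the Lyubeznik bound $\injdim \leq \dim \Supp$ and the finiteness of Bass numbers forces $H^{n+1-q}_I(R) \simeq E^{t_q}$ for some finite $t_q$ whenever $q < s$; hence $H^q_P(\hat{X}, \mathcal{O}_{\hat{X}}) \simeq R^{t_q}$ for such $q$. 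The row $E_1^{\bullet, q}$ is then $t_q$ copies of the formal de Rham complex of $R$ over $k$, which the formal Poincar\'e lemma shows to be acyclic away from degree $0$ with $H^0 = k$ (any $f \in k[[x_0,\ldots,x_n]]$ with $df = 0$ is constant in characteristic zero). Hence $E_2^{0,q} = k^{t_q}$ and $E_2^{p,q} = 0$ for $p > 0$, throughout $q < s$.

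From here the abutment reads off. For $j < s$, the antidiagonal $p + q = j$ has $E_2^{p,q} = 0$ except at $(0,j)$, and every $d_r$ ($r \geq 2$) into or out of $(0,j)$ targets a slot with $q$-coordinate strictly less than $s$, hence zero. Thus $E_\infty^{0,j} = k^{t_j}$ and $H^j_P(Y) \simeq k^{t_j}$; the edge map realizes $H^j_P(Y)$ as the subspace of constant sections $k^{t_j} \subset R^{t_j} \simeq H^j_P(\hat{X}, \mathcal{O}_{\hat{X}})$, whose $R$-linear extension is tautologically an isomorphism. For $j = s$, the antidiagonal still has $E_2^{p, s-p} = 0$ for every $p \geq 1$ (since $s - p < s$), so the filtration on $H^s_P(Y)$ collapses to $E_\infty^{0,s}$ and the edge map $H^s_P(Y) \hookrightarrow H^s_P(\hat{X}, \mathcal{O}_{\hat{X}})$ is injective.

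The main obstacle is upgrading this injectivity to injectivity of the $R$-linear extension $R \otimes_k H^s_P(Y) \to H^s_P(\hat{X}, \mathcal{O}_{\hat{X}})$, since $R$-freeness of the target fails precisely at the boundary $q = s$. My plan is to identify $H^s_P(Y)$ with the horizontal subspace $\ker\nabla$ of the connection on $H^s_P(\hat{X}, \mathcal{O}_{\hat{X}})$ given by $d_1$; a hypothetical $R$-relation $\sum r_i \eta_i = 0$ among $k$-linearly independent horizontal sections becomes, after repeated application of $\nabla$ and passage to the associated graded with respect to $\mathfrak{m}$, a system forcing each $r_i$ to lie in every power of $\mathfrak{m}$, hence to vanish by Krull intersection---this being the step in which the characteristic-zero hypothesis is crucial.
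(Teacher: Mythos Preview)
The paper does not give its own proof of this statement; Theorem~\ref{thm:3.1} is quoted verbatim from \cite[Theorem 2.3]{ogus} and used as a black box. So there is nothing in the paper to compare against. What you have written is, in outline, a reconstruction of Ogus's own argument: the Hodge spectral sequence for $\mathbb{H}^{\bullet}_P(\hat X,\hat\Omega^{\bullet}_{X/k})$, the identification $E_1^{\bullet,q}\cong(\hat\Omega^{\bullet}_{R/k})^{t_q}$ for $q<s$ via Matlis duality and Lyubeznik's $\injdim\le\dim\Supp$, and the formal Poincar\'e lemma in characteristic zero. Your degeneration analysis for $j<s$ and the identification of the edge map with the inclusion $k^{t_j}\subset R^{t_j}$ are correct.

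The one place that needs tightening is the boundary case $j=s$. You correctly reduce to showing that $k$-linearly independent horizontal sections $\eta_1,\dots,\eta_m\in\ker\nabla\subset H^s_P(\hat X,\mathcal{O}_{\hat X})$ are $R$-linearly independent. Your ``associated graded'' sketch is not quite enough as written, because you have no control over the images of the $\eta_i$ in $M/\mathfrak{m}M$. The clean way is a minimal-counterexample argument: take $m$ minimal admitting a nontrivial relation $\sum r_i\eta_i=0$; apply a suitable $\partial^{\alpha}$ (characteristic zero) to arrange that some $r_{i_0}$ is a unit; normalize to $r_{i_0}=1$; then one more application of $\nabla$ yields $\sum_{i\ne i_0}\partial_l(r_i)\eta_i=0$, a relation among $m-1$ horizontal sections, forcing all $\partial_l(r_i)=0$ and hence all $r_i\in k$, contradicting $k$-linear independence. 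With this adjustment your proof is complete.
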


Notice that by Lemma \ref{lem:2.1}, the hypothesis of Ogus's theorem (that the local cohomology be supported at the maximal ideal) holds if we take $s = r + 1$.  The conclusion in our case is then that, for $j < r+1 = s$, we have an isomorphism $R \otimes_k H^j_P(Y) \simeq H^j_P(\hat{X}, \mathcal{O}_{\hat{X}})$, where the right-hand side is isomorphic to $R^{t_j}$.  Furthermore, by \cite[Theorem III.2.1]{hartshorne}, the local de Rham cohomology $H^j_P(Y)$ is a finite-dimensional $k$-vector space for each $j$.  This, together with the isomorphism $R \otimes_k H^j_P(Y) \simeq R^{t_j}$, implies that $\lambda_{0,j}(A) = t_j = \dim_k H^j_P(Y)$, which means we have reduced ourselves further to the calculation of the dimension of this de Rham cohomology space.  Ogus indicates a way to compute this dimension in \cite[Remark, p. 354]{ogus}.  For the convenience of the reader, we give the full details, showing how the explicit formulas of Claims \eqref{claim:2} and \eqref{claim:3} are obtained.\\

We remark that the calculation $\lambda_{0,j}(A) = \dim_k H^j_P(Y)$ is not new, and explain here how it follows from results in the literature.  By the ``Lefschetz principle'' it suffices to assume $k = \mathbb{C}$.  Blickle and Bondu prove in \cite[Theorem 1.2(1)]{blickle} that $\lambda_{0,j}(A) = \dim_{\mathbb{C}} H^j_{\{P\}}(Y; \mathbb{C})$, where this last denotes local singular cohomology with complex coefficients.  By the local de Rham-Betti comparison theorem \cite[\S IV.3, Remark]{hartshorne}, we have $H^j_{\{P\}}(Y; \mathbb{C}) \simeq H^j_P(Y)$.  This argument could have replaced the preceding five paragraphs.  However, the proof of Blickle and Bondu (which applies to cases more general than isolated singularities) requires substantial machinery of derived categories and intersection cohomology.  We wished to indicate a simpler algebraic proof in our case.\\

Hartshorne in \cite{hartshorne} relates the local de Rham cohomology spaces $H^j_P(Y)$ to global de Rham cohomology of the projective variety $V$ using exact sequences:

\begin{theorem}\label{thm:3.2}

\cite[Proposition III.3.2]{hartshorne} Let $V \subset \mathbb{P}^n_k$ be a projective variety, $C \subset \mathbb{A}^{n+1}_k$ the affine cone over $V$, and $P \in C$ the vertex.  Then $H_P^0(C) = 0$ and we have the following two exact sequences, where $H_P^i$ and $H_{dR}^i$ denote local algebraic de Rham cohomology and global algebraic de Rham cohomology, respectively:
\[
0 \rightarrow k \rightarrow H^0_{dR}(V) \rightarrow H_P^1(C) \rightarrow 0
\]
and
\[ 
0 \rightarrow H^1_{dR}(V) \rightarrow H^2_P(C) \rightarrow H^0_{dR}(V) \rightarrow H^2_{dR}(V) \rightarrow H^3_P(C) \rightarrow H^1_{dR}(V) \rightarrow \cdots
\]
Here the maps $H^i_{dR}(V) \rightarrow H^{i+2}_{dR}(V)$ for $i \geq 0$ are given by cup product with the class of a hyperplane section $\zeta \in H^2_{dR}(V)$.

\end{theorem}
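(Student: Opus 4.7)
The plan is to combine the localization long exact sequence for local de Rham cohomology at $P$ with a Gysin-type long exact sequence coming from the $\mathbb{G}_m$-bundle structure on the punctured cone. Set $U = C \setminus \{P\}$. Excision in local cohomology gives
\[
\cdots \to H^{i-1}_{dR}(U) \to H^i_P(C) \to H^i_{dR}(C) \to H^i_{dR}(U) \to \cdots,
\]
interpreted on the appropriate formal completions so that the algebraic de Rham machinery of \cite{hartshorne} applies. To exploit this, one needs both $H^*_{dR}(C)$ and $H^*_{dR}(U)$.

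First I would show that $H^0_{dR}(C) = k$ and $H^i_{dR}(C) = 0$ for $i \geq 1$. The key point is that $C$ is stable under the dilation action of $\mathbb{G}_m$ on $\mathbb{A}^{n+1}_k$, which fixes $P$ and otherwise retracts onto it; the Euler derivation $E = \sum_i x_i \partial_i$, together with contraction by $E$, yields a homotopy operator that trivializes the formal de Rham complex of $\mathbb{A}^{n+1}_k$ completed along $C$ above degree zero. Next I would compute $H^*_{dR}(U)$ using that $U$ is the punctured total space of the tautological line bundle $\mathcal{O}_V(-1)$, hence a principal $\mathbb{G}_m$-bundle $\pi \colon U \to V$. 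The Leray spectral sequence for $\pi$ has only two nonzero rows ($q = 0, 1$), the $d_2$ differential is cup product with the Euler class $c_1(\mathcal{O}_V(-1)) = -\zeta$, and the sequence degenerates at $E_3$. This produces (up to a harmless sign) the Gysin sequence
\[
\cdots \to H^{i-2}_{dR}(V) \xrightarrow{\cup \zeta} H^i_{dR}(V) \to H^i_{dR}(U) \to H^{i-1}_{dR}(V) \xrightarrow{\cup \zeta} H^{i+1}_{dR}(V) \to \cdots.
\]

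Finally I would splice. From the vanishing of $H^i_{dR}(C)$ for $i \geq 1$, the localization sequence gives $H^i_P(C) \cong H^{i-1}_{dR}(U)$ for $i \geq 2$, together with the five-term sequence $0 \to H^0_P(C) \to k \to H^0_{dR}(U) \to H^1_P(C) \to 0$. The low-degree end of the Gysin sequence identifies $H^0_{dR}(U)$ with $H^0_{dR}(V)$, and the composite $k \to H^0_{dR}(V)$ is the injective restriction of constants; this forces $H^0_P(C) = 0$ and yields the first stated short exact sequence. For $i \geq 2$, substituting the Gysin sequence (shifted by one) into $H^i_P(C) \cong H^{i-1}_{dR}(U)$ reproduces the stated long exact sequence verbatim, with connecting maps $\cup \zeta \colon H^i_{dR}(V) \to H^{i+2}_{dR}(V)$. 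The main obstacle is technical rather than conceptual: rigorously justifying both the vanishing $H^i_{dR}(C) = 0$ for $i \geq 1$ and the Gysin sequence in the algebraic de Rham setting requires careful manipulation of hypercohomology on formal completions along singular subschemes, which is precisely the framework developed in Chapter III of \cite{hartshorne}.
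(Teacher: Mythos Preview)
The paper does not supply its own proof of this theorem; it is quoted verbatim as \cite[Proposition III.3.2]{hartshorne} and used as a black box. Your outline is essentially Hartshorne's original argument: combine the localization long exact sequence for the pair $(C, C\setminus\{P\})$, the acyclicity of the cone (via the Euler derivation homotopy), and the Gysin sequence for the $\mathbb{G}_m$-bundle $C\setminus\{P\}\to V$. So your proposal is correct and matches the source the paper cites.

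One phrasing point: you say you ``substitute'' the Gysin sequence into the isomorphisms $H^i_P(C)\cong H^{i-1}_{dR}(U)$. Strictly speaking, what happens is that once $H^i_{dR}(C)=0$ for $i\geq 1$, the localization sequence collapses to a family of isomorphisms, and you then transport the already-exact Gysin sequence along these isomorphisms term by term. Since you are only renaming objects via isomorphisms, exactness is preserved automatically; no further splicing argument is needed. Your own caveat at the end is apt: the genuine work is in making the homotopy argument for $H^{\geq 1}_{dR}(C)=0$ and the Leray/Gysin sequence rigorous in Hartshorne's formal-completion framework, and that is exactly what Chapter~III of \cite{hartshorne} provides.
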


By the ``strong excision theorem'' \cite[Proposition III.3.1]{hartshorne}, the $H^j_P(C)$ of the previous theorem (the de Rham cohomology of $C$ supported at the closed point $P$) and the $H^j_P(Y)$ appearing in Ogus's theorem (local de Rham cohomology of $\Spec(R/I) = \Spec(A)$) are the same, so that $\lambda_{0,j}(A) = \dim_k(H^j_P(C))$.\\

As in the statement of Main Theorem \ref{thm:1.2}, let $\beta_i$ denote the dimension of the $k$-vector space $H^i_{dR}(V)$.  Theorem \ref{thm:3.2}, which is true for any projective variety over a field of characteristic zero, is already enough for us to determine $\lambda_{0,0}(A) = 0$ (from the first statement) and $\lambda_{0,1}(A) = \beta_0 - 1$ (from the short exact sequence), proving Claim \eqref{claim:2}.  To prove Claim \eqref{claim:3} we will need the hypothesis that $V$ is nonsingular, and also the assumption (justified in the Introduction) that $k$ is algebraically closed.  These hypotheses allow us to extract more information from the long exact sequence of Theorem \ref{thm:3.2} by using the hard Lefschetz theorem in the following form:

\begin{theorem}\label{thm:3.3}
(Hard Lefschetz theorem for algebraic de Rham cohomology) If $V$ is a nonsingular projective variety of dimension $r$ over an algebraically closed field $k$ of characteristic zero, then for each $i$ with $0 \leq i \leq r$, the map $H^{r-i}_{dR}(V) \rightarrow H^{r+i}_{dR}(V)$ defined by $i$-fold cup product with the hyperplane section $\zeta$ is an isomorphism.
\end{theorem}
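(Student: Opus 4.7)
The plan is to reduce to the classical hard Lefschetz theorem for smooth complex projective varieties (a consequence of Hodge theory on compact K\"ahler manifolds) via two independent steps: a Lefschetz-principle descent from $\mathbb{C}$ to arbitrary algebraically closed $k$ of characteristic zero, and Grothendieck's comparison between algebraic de Rham cohomology over $\mathbb{C}$ and complex singular cohomology.

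For the descent step, spread $V \hookrightarrow \mathbb{P}^n_k$ and the hyperplane class $\zeta$ out to a smooth projective family $V_0 \to \Spec(A_0)$ over a finitely generated $\mathbb{Q}$-subalgebra $A_0 \subset k$. After a Zariski localization of $A_0$, the relative hypercohomology modules $\mathbb{H}^j(V_0, \Omega^{\bullet}_{V_0/A_0})$ are free $A_0$-modules of the same rank as $H^j_{dR}(V)$ and their formation commutes with arbitrary base change (this uses Hodge-to-de-Rham degeneration in characteristic zero together with generic freeness), so the Lefschetz map is a homomorphism of free $A_0$-modules of equal rank. Its determinant is an element of $A_0$. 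Since $A_0$ is a finitely generated $\mathbb{Q}$-domain, its fraction field embeds into $\mathbb{C}$, and if the classical hard Lefschetz theorem holds after this base change, then the determinant is nonzero in $A_0$. Because $A_0 \subset k$, the determinant remains nonzero in $k$, and the Lefschetz map is an isomorphism over $k$.

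Over $\mathbb{C}$, Grothendieck's comparison theorem provides a natural, cup-product-compatible isomorphism $H^j_{dR}(V) \simeq H^j(V(\mathbb{C}), \mathbb{C})$ under which $\zeta$ corresponds to the topological first Chern class $c_1(\mathcal{O}_V(1))$, represented in de Rham cohomology by the restriction of the Fubini--Study K\"ahler form on $\mathbb{P}^n(\mathbb{C})$. The classical hard Lefschetz theorem (proved via harmonic forms and the $\mathfrak{sl}_2$-action on the cohomology of a compact K\"ahler manifold) then asserts that $i$-fold cup product with this class induces an isomorphism $H^{r-i}(V(\mathbb{C}), \mathbb{C}) \to H^{r+i}(V(\mathbb{C}), \mathbb{C})$, and transporting through the comparison isomorphism finishes the proof.

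The main technical obstacle is verifying the compatibility assertions that make the reduction go: that Grothendieck's isomorphism intertwines algebraic cup product with $\zeta$ and topological cup product with $c_1(\mathcal{O}(1))$, and that the spreading-out construction is genuinely compatible with the Lefschetz map (so that a single isomorphism after one fiberwise base change implies nonvanishing of the determinant over $A_0$). Both are standard but rest on nontrivial foundational results (the theory of relative algebraic de Rham cohomology and GAGA), so in practice one cites these from the literature rather than verifying them from first principles.
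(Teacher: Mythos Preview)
Your proposal is correct and follows the same two-step strategy as the paper: reduce to $k=\mathbb{C}$ via the Lefschetz principle, then invoke the comparison theorem between algebraic de Rham and singular cohomology (compatible with cup product and the hyperplane class) to import the classical hard Lefschetz theorem from Hodge theory.

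The only notable difference is in how the descent is carried out. You spread out to a finitely generated $\mathbb{Q}$-algebra $A_0\subset k$, invoke relative de Rham cohomology, Hodge-to-de-Rham degeneration, and generic freeness to get a free $A_0$-module with a Lefschetz endomorphism, and then argue with its determinant. The paper instead descends directly to a finitely generated \emph{subfield} $k'\subset k$ over which $V$ is defined, embeds $k'$ in $\mathbb{C}$, and uses only the elementary fact that for a smooth variety the hypercohomology of the de Rham complex commutes with extension of the scalar field. This avoids relative de Rham cohomology over a base ring entirely and is shorter; your spreading-out argument is correct but brings in more machinery than is needed here.
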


\begin{proof}
The classical hard Lefschetz theorem \cite[p. 122]{griffiths} is the analogue (with $k = \mathbb{C}$) of the preceding assertion with the de Rham cohomology of $V$ replaced by the singular cohomology $H^j(V^{an};\mathbb{C})$ of the associated complex-analytic space.  By Hartshorne's comparison theorem \cite[Theorem IV.1.1]{hartshorne}, we have, for all $j$, $H^j_{dR}(V) \simeq H^j(V^{an};\mathbb{C})$ (this only requires that $V$ be a scheme of finite type over $\mathbb{C}$), and this isomorphism carries the de Rham hyperplane class to the Betti hyperplane class, so that the result is true when the base field is $\mathbb{C}$.  The statement for an arbitrary algebraically closed base field $k$ with $\ch(k) = 0$ follows from the statement for $k = \mathbb{C}$ by the ``Lefschetz principle''.  Take a finite set of generators for the homogeneous defining ideal of $V$, in which only finitely many coefficients from $k$ appear, and consider the field $k'$ obtained by adjoining this finite set of coefficients to $\mathbb{Q}$.  $V$ may thus be defined over the field $k'$: if $V'$ is the variety defined over $k'$ by the same homogeneous polynomials as $V$, then $V = V' \times_{k'} k$, and $V'$ is also nonsingular.  Since $\mathbb{Q} \subset k' \subset k$ and $k$ is algebraically closed, we may embed $k'$ in $\mathbb{C}$.  As $V$ is nonsingular, $H^j_{dR}(V)$ is simply the hypercohomology of the de Rham complex of $V$, which is seen immediately to commute with extensions of the scalar field because formation of the module of K\"{a}hler differentials commutes with such extensions; this reduces the hard Lefschetz theorem for $V$ over $k'$ to the hard Lefschetz theorem for $V$ over $\mathbb{C}$, which we've already seen is true.
\end{proof}

We return to the proof of Claim \eqref{claim:3}.  If $j < r$ (so that $H^j_{dR}(V)$ is the source of one of the hard Lefschetz isomorphisms), the map $H^j_{dR}(V) \rightarrow H^{j+2}_{dR}(V)$ defined by cup product with $\zeta$, which occurs in the long exact sequence of Theorem \ref{thm:3.2}, is injective.  That long exact sequence hence splits into short exact sequences as follows:
\[
0 \rightarrow H^1_{dR}(V) \rightarrow H^2_P(C) \rightarrow 0
\]
and, for all $j \geq 3$,
\[
0 \rightarrow H^{j-3}_{dR}(V) \rightarrow H^{j-1}_{dR}(V) \rightarrow H^j_P(C) \rightarrow 0.
\]
We see at once from these exact sequences of finite-dimensional $k$-vector spaces that $\lambda_{0,2} = \dim_k H^2_P(C) = \beta_1$ and, for $j \geq 3$, $\lambda_{0,j} = \dim_k H^j_P(C) = \beta_{j-1} - \beta_{j-3}$, proving Claim \eqref{claim:3}.\qed

\end{document}